\documentclass{article}
\usepackage{amsmath}
\usepackage{amsfonts}
\usepackage{amssymb}
\usepackage{pstricks,pst-node,amssymb,amsmath,graphics,latexsym,tabularx,shapepar}
\usepackage{graphicx}
\newtheorem{theorem}{Theorem}

\newtheorem{corollary}[theorem]{Corollary}

\newtheorem{proposition}[theorem]{Proposition}

\newenvironment{proof}[1][Proof]{\noindent\textbf{#1.} }{\ \rule{0.5em}{0.5em}}
\newcommand{\bpartial}{\mathop{\partial\kern -4pt\raisebox{.8pt}{$|$}}}
\newcommand{\bra}{\mathopen{[\kern-1.6pt[}}
\newcommand{\ket}{\mathclose{]\kern-1.5pt]}}
\newcommand{\bbra}{\mathopen{[\kern-2.2pt[\kern-2.3pt[}}
\newcommand{\bket}{\mathclose{]\kern-2.1pt]\kern-2.3pt]}}

\title{Poisson–Hamiltonian Pontryagin Dynamics and Optimal Control of Mechanical Systems on Lie Groupoids}
\author{Ghorbanali Haghighatdoost \\
Department of Mathematics, Azarbaijan Shahid Madani University, Tabriz, Iran \\
Email: gorbanali@azaruniv.ac.ir}

\begin{document}

\maketitle
\begin{abstract}
We develop a Poisson--Hamiltonian formulation of Pontryagin dynamics for optimal control of mechanical systems on Lie groupoids. 
The reduced dynamics is formulated intrinsically on the dual Lie algebroid endowed with its canonical linear Poisson structure and evolves on its symplectic leaves. 

The main result of this work shows that symplectic leaves, rather than coadjoint orbits, provide the natural reduced phase spaces for Pontryagin dynamics on Lie groupoids.
Under suitable regularity assumptions, we prove the equivalence between the variational formulation of the optimal control problem and the associated Poisson--Hamiltonian Pontryagin system, and we show that groupoid-invariant Lagrangians lead to reduced optimality conditions of Euler--Poincar\'e type. 
Several mechanical examples, including systems with configuration-dependent inertia and local symmetries, are presented to illustrate the theory.
\end{abstract}

\textbf{Keywords:} Lie groupoid, Euler--Poincar\'e equations, Geometric optimal control, Symplectic leaves. \\

\textbf{AMS Subject Classification:} Primary: 70H05, 49J15 ; Secondary: 22A22, 53D17, 70G45, 49J21

\section{Introduction}

Optimal control of mechanical systems with symmetry is classically formulated within the framework of Lie groups.
In this setting, the Pontryagin Maximum Principle leads to Hamiltonian dynamics on the dual Lie algebra endowed with its Lie--Poisson structure, and the reduced trajectories evolve on coadjoint orbits \cite{Kirillov2004, Vaisman1994, Weinstein1983, HaghighatdoostAyoubiCoadjoint}.
This Poisson--Hamiltonian viewpoint has been developed systematically for Lie group symmetries, notably in the work of Bloch and Crouch, and provides a geometric interpretation of reduced optimal control dynamics.\cite{BlochCrouch1999, Bloch,  BulloLewis, Hagh}

However, many mechanical systems of interest do not possess global Lie group symmetries. 
Systems with configuration-dependent, local, or partially defined symmetries arise naturally in mechanics, robotics, and constrained dynamics. 
Such systems are more naturally described by Lie groupoids and Lie algebroids, which generalize Lie groups by allowing symmetry to depend on the base configuration. 
In this broader setting, coadjoint orbits are no longer the fundamental geometric objects governing reduced dynamics.

From a geometric perspective, the dual of a Lie algebroid carries a canonical linear Poisson structure. 
The Hamiltonian dynamics associated with this structure is organized by its symplectic leaves. 
Consequently, in the Lie groupoid setting, \emph{symplectic leaves replace coadjoint orbits as the natural reduced phase spaces} for Hamiltonian and optimal control dynamics. 
Coadjoint orbits appear only as a special case corresponding to Lie group symmetries, whereas symplectic leaves provide an intrinsic and fully general description \cite{MarsdenRatiu1999, CendraMarsdenRatiu2001, HolmMarsdenRatiu, HaghighatdoostEPGroupoids, Martinez2001, Weinstein1983}.

Variational mechanics on Lie groupoids and Lie algebroids has been investigated in several foundational works \cite{Weinstein1996, Bloch,CendraMarsdenRatiu2001, HaghighatdoostEPGroupoids, HolmMarsdenRatiu, Landsman, Martinez2001}, providing the appropriate geometric framework beyond Lie groups. 
Nevertheless, until recently, Euler--Poincar\'e equations had not been formulated explicitly and systematically at the level of Lie groupoids. 

In our previous work, Euler--Poincar\'e equations on Lie groupoids were derived directly and intrinsically, yielding the first clear formulation of Euler--Poincar\'e equations as genuine groupoid-level dynamical equations.

In subsequent work, we developed a Hamiltonian framework for mechanical systems on coadjoint Lie groupoids and analyzed the associated Hamiltonian dynamics. 
These results clarified how classical Hamiltonian reduction on coadjoint orbits can be extended to certain groupoid settings, while also revealing the limitations of orbit-based descriptions in the absence of global Lie group symmetries.

The purpose of the present paper is to build directly on these developments and to connect them with optimal control theory. 
Starting from a variational optimal control problem on a Lie algebroid, we derive an intrinsic Poisson--Hamiltonian formulation of Pontryagin dynamics on the dual Lie algebroid. 
We show that the resulting optimal dynamics is governed by the canonical Poisson structure and is naturally confined to the symplectic leaves of the dual Lie algebroid.

Moreover, when the Lie algebroid integrates to a Lie groupoid and the Lagrangian is invariant under the groupoid action, the necessary conditions for optimality admit a reduced description in terms of controlled Euler--Poincar\'e equations on Lie groupoids. 
This establishes a precise link between Euler--Poincar\'e reduction on Lie groupoids, Poisson--Hamiltonian dynamics, and Pontryagin's Maximum Principle.

The framework developed here extends Poisson--Hamiltonian optimal control from Lie groups to Lie groupoids and provides a natural geometric setting for mechanical systems with local or configuration-dependent symmetries, where classical coadjoint-orbit-based reductions are no longer sufficient.

The variational formulation of the optimal control problem considered in this paper may be interpreted as a Lagrange--D'Alembert principle on a Lie groupoid.
Under groupoid invariance, the reduction procedure developed in our work on Euler--Poincar\'e equations on Lie groupoids yields controlled Euler--Poincar\'e equations on the Lie algebroid.
The present results show that these reduced variational equations are equivalent, under regularity assumptions, to a Poisson--Hamiltonian formulation on the dual Lie algebroid $A^*$.

\section{Poisson Geometry and Reduced Optimal Control}

Reduced mechanical systems with symmetry are naturally described by Poisson structures rather than canonical symplectic forms.  
This viewpoint is well established in geometric mechanics, where symmetry reduction typically leads to Hamiltonian systems evolving on Poisson manifolds whose symplectic leaves encode the reduced dynamics.

In the classical Lie group setting, the reduced phase space is the dual Lie algebra $\mathfrak g^*$ endowed with the Lie--Poisson bracket.  
The symplectic leaves of this Poisson structure coincide with the coadjoint orbits of the group, each carrying the Kirillov--Kostant--Souriau symplectic form.  
Within this framework, optimal control problems for mechanical systems admit a natural Hamiltonian formulation, as shown for example in the work of Bloch and Crouch, where Pontryagin's Maximum Principle is interpreted as a Hamiltonian flow on $\mathfrak g^*$.

However, global Lie group symmetries are often too restrictive for mechanical systems with configuration--dependent or local symmetries.  
In such cases, the appropriate infinitesimal structure is not a Lie algebra but a Lie algebroid $A \to M$.  
The dual bundle $A^*$ carries a canonical linear Poisson structure, which plays the role of the Lie--Poisson structure in the group case and provides the correct reduced phase space for Hamiltonian dynamics.

As shown in our earlier work on Hamiltonian systems on coadjoint Lie groupoids, the symplectic leaves of this Poisson manifold may, under suitable integrability conditions, be realized as coadjoint orbits associated with the underlying Lie groupoid.  
In general, however, these symplectic leaves constitute the primary geometric objects governing the reduced dynamics, independently of whether a global groupoid action is explicitly realized.

The purpose of the present work is to formulate optimal control problems for mechanical systems on Lie groupoids directly within this Poisson--Hamiltonian framework.  
In particular, we aim to develop a Pontryagin-type Hamiltonian formulation on the dual Lie algebroid $A^*$ and to show that the resulting optimal dynamics are constrained to the symplectic leaves determined by the canonical Poisson structure.

\section{Poisson Structure on the Dual Lie Algebroid}

Let $A \to M$ be a Lie algebroid with anchor map $\rho : A \to TM$ and Lie bracket $[\cdot,\cdot]$ on the space of sections $\Gamma(A)$.  
It is well known that the dual bundle $A^*$ carries a canonical linear Poisson structure, which generalizes the Lie--Poisson structure on the dual of a Lie algebra.

The Poisson bracket on $C^\infty(A^*)$ is uniquely determined by the following properties:
\begin{itemize}
\item For $f,g \in C^\infty(M)$, viewed as functions on $A^*$ by pullback,
\[
\{f,g\} = 0.
\]
\item For $\sigma \in \Gamma(A)$ and $f \in C^\infty(M)$,
\[
\{\ell_\sigma,f\} = \rho(\sigma)(f),
\]
where $\ell_\sigma(\alpha_x) = \langle \alpha_x,\sigma(x)\rangle$ denotes the linear function associated with $\sigma$.
\item For $\sigma,\eta \in \Gamma(A)$,
\[
\{\ell_\sigma,\ell_\eta\} = \ell_{[\sigma,\eta]}.
\]
\end{itemize}

This Poisson structure is linear in the fibers of $A^*$ and reduces, in the special case where $A=\mathfrak g$ is a Lie algebra, to the classical Lie--Poisson bracket on $\mathfrak g^*$.

The Hamiltonian vector field $X_H$ associated with a Hamiltonian function $H \in C^\infty(A^*)$ is defined by
\[
X_H(F) = \{F,H\}, \qquad F \in C^\infty(A^*).
\]
In local coordinates $(x^i,\mu_a)$ on $A^*$, where $\{e_a\}$ is a local basis of sections of $A$, the Poisson bracket takes the form
\[
\{x^i,x^j\} = 0, \qquad
\{x^i,\mu_a\} = \rho_a^i(x), \qquad
\{\mu_a,\mu_b\} = C_{ab}^c(x)\mu_c,
\]
where $\rho_a^i$ are the components of the anchor and $C_{ab}^c$ are the structure functions of the Lie algebroid.

The symplectic leaves of this Poisson manifold are immersed submanifolds of $A^*$ obtained as the integral manifolds of the characteristic distribution generated by Hamiltonian vector fields.  
These leaves generalize coadjoint orbits of Lie groups and provide the natural phase spaces for reduced Hamiltonian dynamics on Lie groupoids.

In the presence of an integrating Lie groupoid $\mathcal G \rightrightarrows M$, these symplectic leaves may be realized, under suitable regularity assumptions, as coadjoint orbits associated with $\mathcal G$, in the sense described in our earlier works \cite{Hagh,HaghighatdoostAyoubiCoadjoint}.  
However, the Poisson structure on $A^*$ and its symplectic foliation are defined intrinsically at the algebroid level and do not depend on the existence of a global groupoid action.

\section{Optimal Control Systems on Lie Algebroids}

Let $A \to M$ be a Lie algebroid with anchor $\rho : A \to TM$.  
An \emph{admissible curve} in $A$ is a curve $a(t) \in A$ with base curve $x(t)=\pi(a(t))$ satisfying
\[
\dot{x}(t) = \rho(a(t)).
\]
Such curves generalize trajectories of left- or right-invariant systems on Lie groups and describe the kinematics of mechanical systems with local symmetries.

\subsection{Control systems}

Let $E \to M$ be a control bundle and let
\[
\Phi : E \to A
\]
be a smooth bundle map covering the identity on $M$.  
A control $u(t) \in E_{x(t)}$ determines an admissible curve
\[
a(t) = \Phi(u(t)) \in A_{x(t)},
\]
and the induced state equation is
\[
\dot{x}(t) = \rho(\Phi(u(t))).
\]
This defines an optimal control system on the Lie algebroid $A$.

\subsection{Cost functional}

Let
\[
L : E \to \mathbb{R}
\]
be a smooth cost function.  
Given fixed endpoints $x(0)=x_0$ and $x(T)=x_T$, the optimal control problem consists in minimizing the functional
\[
J(u) = \int_0^T L(u(t))\,dt
\]
over all admissible controls $u(t)$.

This formulation includes, as special cases:
\begin{itemize}
\item Optimal control on Lie groups (when $M$ is a point),
\item Euler--Poincaré optimal control systems,
\item Optimal control problems on coadjoint Lie groupoids.
\end{itemize}

\subsection{Pontryagin Hamiltonian}

Consider the Pontryagin bundle
\[
A^* \times_M E = \{(\alpha_x,u_x) \mid \alpha_x \in A_x^*,\; u_x \in E_x\}.
\]
The Pontryagin Hamiltonian is defined by
\[
H(\alpha_x,u_x) = \langle \alpha_x, \Phi(u_x) \rangle - L(u_x).
\]

For fixed $(\alpha_x,x)$, the optimal control $u^*(\alpha_x)$ is determined by the stationarity condition
\[
\frac{\partial H}{\partial u}(\alpha_x,u^*(\alpha_x)) = 0,
\]
whenever this equation admits a solution.

Substituting $u^*$ yields the reduced Hamiltonian
\[
\mathcal{H}(\alpha_x) = H(\alpha_x,u^*(\alpha_x)),
\]
which is a smooth function on $A^*$.

\subsection{Hamiltonian dynamics on $A^*$}

The dual Lie algebroid $A^*$ is equipped with its canonical linear Poisson structure.  
The Hamiltonian vector field associated with $\mathcal{H}$ is defined by
\[
\dot{F} = \{F,\mathcal{H}\}, \qquad F \in C^\infty(A^*),
\]
where $\{\cdot,\cdot\}$ denotes the Poisson bracket on $A^*$.

In local coordinates $(x^i,\mu_a)$ on $A^*$, the equations of motion take the form
\[
\dot{x}^i = \rho_a^i(x)\frac{\partial \mathcal{H}}{\partial \mu_a}, \qquad
\dot{\mu}_a = -\rho_a^i(x)\frac{\partial \mathcal{H}}{\partial x^i}
- C_{ab}^c(x)\mu_c \frac{\partial \mathcal{H}}{\partial \mu_b},
\]
where $\rho_a^i$ and $C_{ab}^c$ are the anchor and structure functions of the Lie algebroid.

\subsection{Geometric properties}

The Hamiltonian flow generated by $\mathcal{H}$ preserves the Poisson structure on $A^*$ and is therefore confined to the symplectic leaves of $A^*$.  
These leaves generalize coadjoint orbits in the Lie group case and constitute the natural reduced phase spaces for optimal control dynamics.

When the Lie algebroid integrates to a Lie groupoid, these symplectic leaves may be realized as coadjoint orbits of the groupoid in the sense developed in earlier work.  
However, the Hamiltonian formulation above is intrinsic and does not depend on the explicit realization of such orbits.

\paragraph{Relation to previous work.}
Optimal control problems on coadjoint Lie groupoids were previously investigated by the author \cite{Hagh}, where Hamiltonian dynamics were formulated on coadjoint orbits associated with a Lie groupoid.  
The present work adopts a different but compatible perspective: rather than starting from orbit realizations, we formulate optimal control intrinsically on the dual Lie algebroid $A^*$ equipped with its canonical linear Poisson structure.  
This approach clarifies the geometric origin of the Pontryagin Hamiltonian dynamics, identifies symplectic leaves as the fundamental reduced phase spaces, and provides a direct generalization of the Poisson--Hamiltonian framework for optimal control developed by Bloch and Crouch in the Lie group setting.

\section{Two Optimal Control Problems}

In this section we formulate and compare two optimal control problems associated with mechanical systems on a Lie algebroid.  
The first problem is posed in a variational (Lagrangian) framework on the Lie algebroid itself, while the second problem is formulated in a Hamiltonian framework on the dual Lie algebroid using Pontryagin's Maximum Principle.  
We show that, under natural regularity assumptions, these two formulations are equivalent and lead to the same reduced Hamiltonian dynamics on the Poisson manifold $A^*$.

\subsection{Problem I: Variational optimal control on a Lie algebroid}

Let $A \to M$ be a Lie algebroid with anchor $\rho : A \to TM$.  
Consider admissible curves $a(t) \in A$ with base curve $x(t)=\pi(a(t))$ satisfying
\[
\dot{x}(t) = \rho(a(t)).
\]

Let $E \to M$ be a control bundle and $\Phi : E \to A$ a smooth bundle map.  
The controlled dynamics are given by
\[
a(t) = \Phi(u(t)), \qquad u(t) \in U_{x(t)}.
\]

Let
\[
L : E \to \mathbb{R}
\]
be a smooth Lagrangian.  
The variational optimal control problem consists in minimizing
\[
J(u) = \int_0^T L(u(t))\,dt
\]
over all admissible controls $u(t)$ subject to fixed endpoint conditions on $x(t)$.

Critical points of this problem are characterized by constrained variations of admissible curves in the Lie algebroid.  
The resulting necessary conditions take the form of controlled Euler--Lagrange equations on $A$, or equivalently, Euler--Poincaré-type equations when symmetry reduction is present.

\subsection{Problem II: Hamiltonian optimal control on the dual Lie algebroid}

The same optimal control problem may be reformulated using Pontryagin's Maximum Principle.  
Consider the Pontryagin bundle
\[
A^* \times_M E
\]
and define the Pontryagin Hamiltonian
\[
H(\alpha_x,u_x) = \langle \alpha_x, \Phi(u_x) \rangle - L(u_x),
\qquad (\alpha_x,u_x) \in A^* \times_M E.
\]

Assuming the stationarity condition
\[
\frac{\partial H}{\partial u}(\alpha_x,u_x) = 0
\]
admits a solution $u^*(\alpha_x)$, we define the reduced Hamiltonian
\[
\mathcal{H}(\alpha_x) = H(\alpha_x,u^*(\alpha_x)).
\]

The optimal dynamics are then governed by the Hamiltonian vector field generated by $\mathcal{H}$ with respect to the canonical linear Poisson structure on $A^*$:
\[
\dot{F} = \{F,\mathcal{H}\}, \qquad F \in C^\infty(A^*).
\]

This yields a Poisson--Hamiltonian system whose trajectories evolve on symplectic leaves of $A^*$.

\subsection{Equivalence of the two formulations}

Assume that the Lagrangian $L$ is regular in the control variables and that the map $\Phi : E \to A$ is fiberwise surjective.  
Then the Legendre transformation associated with $L.$

\section{Main Equivalence Theorem}

We now state the central result of this work, establishing the equivalence between the variational and Hamiltonian formulations of optimal control for mechanical systems on Lie algebroids.

\begin{theorem}[Equivalence of variational and Hamiltonian optimal control]
Let $A \to M$ be a Lie algebroid with anchor $\rho : A \to TM$, and let
\[
\Phi : E \to A
\]
be a smooth fiberwise surjective bundle map defining a control system on $A$.  
Let $L : E \to \mathbb{R}$ be a smooth Lagrangian, regular in the control variables.

Then the following statements hold:

\begin{enumerate}
\item[(i)] Every critical trajectory of the variational optimal control problem on $A$, defined by minimizing
\[
J(u) = \int_0^T L(u(t))\,dt
\]
over admissible curves, projects to an integral curve of the Hamiltonian vector field generated by the reduced Pontryagin Hamiltonian
\[
\mathcal{H}(\alpha_x) = \langle \alpha_x, \Phi(u^*(\alpha_x)) \rangle - L(u^*(\alpha_x))
\]
on the dual Lie algebroid $A^*$, where $u^*$ is determined by the stationarity condition.

\item[(ii)] Conversely, every solution of the Hamiltonian system on $A^*$ generated by $\mathcal{H}$ projects to a critical trajectory of the variational optimal control problem on $A$.

\item[(iii)] The Hamiltonian flow generated by $\mathcal{H}$ is a Poisson flow with respect to the canonical linear Poisson structure on $A^*$, and its trajectories are confined to symplectic leaves of $A^*$.

\end{enumerate}

Consequently, the variational and Hamiltonian formulations of the optimal control problem on the Lie algebroid $A$ are equivalent.
\end{theorem}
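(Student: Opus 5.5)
We work throughout in local coordinates $(x^i,\mu_a)$ on $A^*$ adapted to a local basis $\{e_a\}$ of $\Gamma(A)$, with fiber coordinates $u^\alpha$ on $E$, so that $\Phi(u)=\Phi^a(x,u)e_a$. The variational problem of Problem I is treated via the standard Lagrange--D'Alembert / Lagrange multiplier principle for Lie algebroids. Admissible variations of an admissible curve $a(t)$ are generated by time-dependent sections $\eta(t)$ of $A$ vanishing at the endpoints, with
\[
\delta x^i=\rho^i_a\eta^a,\qquad \delta a^c=\dot\eta^c+C^c_{ab}a^a\eta^b .
\]
Adjoining the constraint $a=\Phi(u)$ with a multiplier $\mu(t)\in A^*_{x(t)}$ gives the augmented functional
\[
\int_0^T \bigl(L(u)+\langle \mu,a-\Phi(u)\rangle\bigr)\,dt .
\]

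For part (i) we compute the first variation with respect to $u$, to $\mu$, and along admissible variations of $a$. The $u$-variation gives $\partial L/\partial u^\alpha=\mu_a\,\partial\Phi^a/\partial u^\alpha$, which is exactly $\partial H/\partial u=0$. Regularity of $L$ in the controls, via the implicit function theorem, lets us solve this as $u=u^*(x,\mu)$ smoothly. Fiberwise surjectivity of $\Phi$ ensures that $\mu$ is determined by $u$ along the curve, so the multiplier is genuinely a curve in $A^*$. The $\mu$-variation returns $a=\Phi(u^*)$. By the envelope identity $\partial_u H=0$ we get $\partial\mathcal H/\partial\mu_a=\Phi^a(u^*)=a^a$, so $\dot x^i=\rho^i_a a^a$ becomes the first Hamilton equation of the previous section. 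Integrating by parts the $\eta$-terms gives
\[
\dot\mu_a=-C^c_{ab}\,a^b\mu_c-\rho^i_a\,\partial_{x^i}\mathcal H ,
\]
where we again use the envelope identity, in the form $\partial_x\mathcal H=\partial_x H$, to identify the $x$-derivative terms of $L$ and $\Phi$. Substituting $a^b=\partial\mathcal H/\partial\mu_b$ recovers the second Hamilton equation $\dot\mu_a=\{\mu_a,\mathcal H\}$. This is the local bracket computation of the section on Hamiltonian dynamics on $A^*$.

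Part (ii) is the same calculation read backwards. Given an integral curve $(x(t),\mu(t))$ of $X_{\mathcal H}$, we set $u(t)=u^*(x(t),\mu(t))$ and $a=\Phi(u)$. The first Hamilton equation gives admissibility, and reversing the integration by parts shows the first variation vanishes for every admissible $\eta$. Hence $u$ is critical, and its projection to $A$ is a critical trajectory.

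Part (iii) is general Poisson geometry. Any Hamiltonian vector field satisfies $L_{X_{\mathcal H}}\pi=0$, by the Jacobi identity for the bracket defined in the section on the Poisson structure. Moreover $X_{\mathcal H}$ is tangent to the characteristic distribution, so its integral curves stay in a single leaf. This is the leaf theorem of Stefan--Sussmann/Weinstein.

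The main obstacle is the $x$-dependence of $L$ and $\Phi$ in the integration-by-parts step. In a non-trivial algebroid, varying $x$ via $\rho(\eta)$ while keeping $u$ in $E_{x}$ requires a choice of how to transport $u$: a local trivialization, or a connection on $E$. We must check that the extra terms assemble exactly into $-\rho^i_a\partial_{x^i}\mathcal H$ independently of that choice. We would handle this by working in local trivializations of both $E$ and $A$. The $u$-variation is free, so any choice-dependent correction to $\delta u$ is absorbed by stationarity, and the derived equations are therefore trivialization-independent. A second subtlety is that the endpoint conditions fix only $x$, which is why $\eta$ (not $\delta x$ alone) must vanish at the endpoints. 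We would state explicitly that critical trajectories are understood in this sense.
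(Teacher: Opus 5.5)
Your proposal is correct and follows the same route as the paper's proof sketch. You adjoin $a=\Phi(u)$ with a multiplier curve in $A^*$, get $u=u^*$ from $\partial_u H=0$, derive the adjoint equations on $A^*$ from algebroid variations and the envelope identity, reverse the computation for (ii), and use that $X_{\mathcal H}$ is Hamiltonian for (iii).

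You are also more explicit than the paper in two places where it is vague. First, you fix the variations $\delta x=\rho(\eta)$, $\delta a=\dot\eta+[a,\eta]$ with $\eta(0)=\eta(T)=0$; this is what makes (ii) true, since fixing only $x$ would impose endpoint conditions on $\mu$. Second, you derive leaf confinement from tangency of $X_{\mathcal H}$ to the characteristic distribution, rather than from $\pi$-invariance alone as the paper does.
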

\begin{proof}[Proof sketch]
We outline the standard argument.

\medskip
\noindent\textbf{Step 1 (Pontryagin bundle and stationarity).}
Consider the Pontryagin bundle $A^*\times_M E$ and the Pontryagin Hamiltonian
\[
H(\alpha_x,u_x)=\langle \alpha_x,\Phi(u_x)\rangle - L(u_x).
\]
Regularity of $L$ in the control variables implies that the stationarity condition
\[
\partial_u H(\alpha_x,u_x)=0
\]
determines (locally, and globally under appropriate convexity) a smooth feedback $u^*(\alpha_x)$. Hence the reduced Hamiltonian
\[
\mathcal H(\alpha_x)=H(\alpha_x,u^*(\alpha_x))
\]
is well-defined on $A^*$.

\medskip
\noindent\textbf{Step 2 (Variational problem $\Rightarrow$ Hamiltonian system).}
Let $u(t)$ be a critical control for the variational problem and set $a(t)=\Phi(u(t))$ with base curve $x(t)$. Introduce a Lagrange multiplier curve $\alpha(t)\in A^*_{x(t)}$ enforcing the admissibility constraint $\dot x=\rho(a)$ and consider the augmented functional
\[
\widetilde J(u,\alpha)=\int_0^T\Big(L(u(t))-\langle \alpha(t),\Phi(u(t))\rangle
+\langle \alpha(t),a(t)\rangle\Big)\,dt
=\int_0^T\big(-H(\alpha(t),u(t))+\langle \alpha(t),a(t)\rangle\big)\,dt.
\]
Taking first variations with respect to $u$ gives the stationarity condition
$\partial_u H(\alpha(t),u(t))=0$, hence $u(t)=u^*(\alpha(t))$.
Variations compatible with the Lie algebroid constraint yield the adjoint equations; equivalently, the pair $(x(t),\alpha(t))$ satisfies Hamilton's equations on the Poisson manifold $A^*$ generated by $\mathcal H$:
\[
\dot F(t)=\{F,\mathcal H\}(x(t),\alpha(t))\qquad \forall\,F\in C^\infty(A^*).
\]
In local coordinates $(x^i,\mu_a)$ this reproduces the standard algebroid Hamilton equations
\[
\dot{x}^i=\rho_a^i(x)\frac{\partial\mathcal H}{\partial \mu_a},\qquad
\dot{\mu}_a=-\rho_a^i(x)\frac{\partial\mathcal H}{\partial x^i}
-C_{ab}^c(x)\mu_c\frac{\partial\mathcal H}{\partial \mu_b}.
\]

\medskip
\noindent\textbf{Step 3 (Hamiltonian system $\Rightarrow$ variational criticality).}
Conversely, let $\alpha(t)$ solve the Hamiltonian system on $A^*$ generated by the Pontryagin Hamiltonian $H$ and define $u(t)=u^*(\alpha(t))$. Then the triple $(x(t),u(t),\alpha(t))$ satisfies the PMP stationarity condition together with the state/adjoint dynamics, that is, the Pontryagin differential equations for the state and costate variables associated with $H$.


The PMP conditions are precisely the first-order necessary conditions for criticality of the original constrained variational problem, hence $u(t)$ is a critical control and $x(t)$ a critical trajectory.

\medskip
\noindent\textbf{Step 4 (Poisson invariance and symplectic leaves).}
Since $X_{\mathcal H}$ is a Hamiltonian vector field for the Poisson bracket on $A^*$, its flow preserves the Poisson tensor. Therefore trajectories remain in the integral manifolds of the characteristic distribution, i.e.\ in the symplectic leaves of $A^*$.

\medskip
This establishes the equivalence between the variational and Hamiltonian formulations, and the leafwise confinement of optimal dynamics.
\end{proof}
\begin{corollary}[Euler--Poincar\'e form of the optimality conditions on Lie groupoids]
Assume that the Lie algebroid $A$ integrates to a Lie groupoid $\mathcal G\rightrightarrows M$ and that the Lagrangian $L$ is $\mathcal G$--invariant.
Then, as we showed in \cite{HaghighatdoostEPGroupoids} the variational optimal control problem (Problem~I) reduces to a controlled Euler--Poincar\'e equation on the Lie algebroid,
\[
\frac{d}{dt}\frac{\partial \ell}{\partial X_t}
+ \operatorname{ad}^*_{X_t}\!\left(\frac{\partial \ell}{\partial X_t}\right)
= \mathcal F(u(t)),
\]
where $\ell$ is the reduced Lagrangian and $\mathcal F(u(t))$ is the force term induced by the control.
Moreover, if $\ell$ is regular, this controlled Euler--Poincar\'e equation is equivalent, via the Legendre transform, to the Poisson--Hamiltonian Pontryagin dynamics on $A^*$ generated by the reduced Hamiltonian $\mathcal H$(Problem~II).
\end{corollary}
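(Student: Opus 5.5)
The plan is to prove the corollary in two stages: a reduction stage, for which we invoke \cite{HaghighatdoostEPGroupoids}, and an equivalence stage, which follows from the Main Equivalence Theorem applied to the reduced problem.

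\textbf{Reduction.} Assume $L$ is $\mathcal G$--invariant. Write the optimal control problem on $\mathcal G$ as a Lagrange--d'Alembert principle with the control force $\mathcal F(u)$ entering as an external force. By invariance, $L$ descends to a reduced Lagrangian $\ell$ on $A$, and the admissible curve is $X_t=\Phi(u(t))\in A_{x(t)}$.

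Variations of groupoid curves with fixed endpoints induce constrained variations on $A$ of the form $\delta X=\dot\eta+[X,\eta]$, where $\eta(t)$ is a curve of sections vanishing at the endpoints and the base variation is $\delta x=\rho(\eta)$. Substituting these into $\delta\int\ell\,dt+\int\langle\mathcal F(u),\eta\rangle dt=0$ and integrating by parts gives the stated controlled Euler--Poincar\'e equation. Here $\operatorname{ad}^*$ is the algebroid coadjoint operator, and the base-derivative terms are absorbed into $\frac{d}{dt}$ as in \cite{HaghighatdoostEPGroupoids}. We only recall this computation, not redo it.

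\textbf{Legendre transform.} Assume $\ell$ is regular. Set $\mu=\partial\ell/\partial X\in A^*_{x}$; this is a local diffeomorphism $A\to A^*$ covering the identity. With $u^*$ from the stationarity condition $\partial_uH=0$, fiberwise surjectivity of $\Phi$ lets us identify $\mathcal H(\mu)=\langle\mu,X\rangle-\ell(X)$, modulo the control force. The standard identities then hold:
\[
\frac{\partial\mathcal H}{\partial\mu}=X,\qquad \frac{\partial\mathcal H}{\partial x}=-\frac{\partial\ell}{\partial x}.
\]
Substituting these into the coordinate equations for $X_{\mathcal H}$ on $A^*$ (the $\dot x^i$ and $\dot\mu_a$ equations from the Hamiltonian dynamics section) does two things. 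The $\dot x^i$ equation becomes the admissibility condition $\dot x=\rho(X)$. The term $-C^c_{ab}\mu_c\,\partial\mathcal H/\partial\mu_b$ is exactly $-\operatorname{ad}^*_X\mu$ in coordinates, and the anchor term $-\rho^i_a\,\partial_{x^i}\mathcal H$ reproduces the base-point contribution. Reversing each step gives the converse implication.

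Part (iii) of the Theorem then shows that the solutions lie on symplectic leaves. Combining this with parts (i) and (ii) identifies Problem I with Problem II.

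\textbf{Main obstacle.} The hard part is the force term $\mathcal F(u)$. In the Hamiltonian picture, the control enters through $u^*(\alpha)$ rather than as an external force, so we must show that, after eliminating $u$, $\mathcal F$ vanishes or is absorbed into $\mathcal H$. We first try to define $\mathcal F$ as the residual $\partial_x$-derivative coming from the $E$-dependence of $L$ not captured by $\ell$. We then check that it equals $-\rho^i_a\,\partial_{x^i}(\mathcal H-\mathcal H_\ell)$. If this identification fails, we restrict to the case where $\Phi$ is a fiberwise isomorphism, so that $\ell=L\circ\Phi^{-1}$ and $\mathcal F=0$.
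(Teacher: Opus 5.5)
Your route is the one the paper intends. The paper gives no argument for this corollary beyond citing its earlier Euler--Poincar\'e work for the reduction and asserting the Legendre equivalence. However, your proposal leaves open the step you flag as the main obstacle, and neither of your two exits closes it.

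\textbf{The force term.} Identifying $\mathcal F$ with $-\rho^i_a\,\partial_{x^i}(\mathcal H-\mathcal H_\ell)$ works only if $g=\mathcal H-\mathcal H_\ell$ is a function of $x$ alone. Otherwise $\partial_\mu\mathcal H\neq\partial_\mu\mathcal H_\ell$, so the discrepancy also changes the $\dot x^i$ equation and the $C^c_{ab}$ term, and no force in the momentum equation can absorb that. Even when $g$ depends on $x$ alone, $\mathcal F=-\rho^*dg$ is a potential force that belongs in $\ell$, not a force ``induced by the control''. Since the flow of $\mathcal H$ carries no forcing, the asserted equivalence can only hold with $\mathcal F\equiv 0$ once $\ell$ is chosen correctly. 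Retreating to fiberwise isomorphisms proves a weaker statement than the one claimed.

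\textbf{The missing idea.} Define the reduced Lagrangian so that only surjectivity is needed: $\ell(X)=\min\{L(u):u\in E_x,\ \Phi(u)=X\}$. In the merely regular case, take instead $\ell(X)=L(\hat u(X))$, with $\hat u(X)$ the nondegenerate critical point of $L$ on $\Phi^{-1}(X)$. Then $\sup_u(\langle\alpha,\Phi(u)\rangle-L(u))=\sup_X(\langle\alpha,X\rangle-\ell(X))$. Equivalently, use $\partial_uL=\alpha\circ\partial_u\Phi$ and differentiate $\Phi(x,\hat u(x,X))=X$. This gives $\partial\ell/\partial X=\alpha$ and $\partial\ell/\partial x=-\partial\mathcal H/\partial x$ at $X=\Phi(u^*(\alpha))$. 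So $\mathcal H$ is exactly the Legendre transform of $\ell$, with no ``modulo the control force''. Hence $\mathcal F\equiv0$, and $\ell=L\circ\Phi^{-1}$ is just the isomorphism case.

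\textbf{The base term.} Your two stages disagree here. With $\delta x=\rho(\eta)$, integration by parts leaves $\rho^*(d_x\ell)$, in coordinates $\rho^i_a\,\partial\ell/\partial x^i$. This is not a total time derivative and cannot be ``absorbed into $\frac{d}{dt}$''. For $A=TM$ it is the $\partial L/\partial q$ term of the ordinary Euler--Lagrange equations, and it is nonzero in the paper's examples through $\mathbb I(x)$ and $V(x)$. Your Legendre step actually needs this term, since it is exactly what $-\rho^i_a\partial_{x^i}\mathcal H$ matches. So the reduced equation should read $\frac{d}{dt}\mu+\operatorname{ad}^*_X\mu=\rho^*(d_x\ell)$ in your sign convention.

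\textbf{The endpoint conditions.} Your reduction fixes endpoints in $\mathcal G$ ($\eta(0)=\eta(T)=0$), whereas Problem~I fixes only $x(0),x(T)$. The latter allows $\eta(0),\eta(T)\in\ker\rho$ and imposes an extra transversality condition: $\mu(0)$ and $\mu(T)$ must annihilate $\ker\rho$. For $M$ a point this forces $\mu\equiv0$. You should state explicitly that Problem~I is lifted to $\mathcal G$ with groupoid endpoints. Without this lift the converse direction (every solution on $A^*$ is critical) fails, so you cannot simply invoke part (ii) of the theorem as literally stated.
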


\begin{corollary}
If $A=\mathfrak g$ is a Lie algebra, the above theorem reduces to the equivalence between variational and Pontryagin formulations of optimal control on Lie groups, as developed by Bloch and Crouch.  
In this case, the symplectic leaves of $A^*$ are the coadjoint orbits of the Lie group.
\end{corollary}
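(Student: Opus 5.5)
We prove the last corollary, the Lie algebra case, by specializing the main equivalence theorem to $M=\{\mathrm{pt}\}$ and then identifying the symplectic leaves of $\mathfrak g^*$. First we record the specialization of the data. If $A=\mathfrak g$ is a Lie algebra over a point, the anchor is $\rho=0$ and the structure functions $C_{ab}^c$ are the constant structure constants of $\mathfrak g$ in a basis $\{e_a\}$. The admissibility condition $\dot x=\rho(a)$ is then vacuous. To recover the Lie group setting of Bloch and Crouch, we reinstate the group configuration through the reconstruction equation $\dot g=g\,\xi$ with $\xi(t)=\Phi(u(t))$. The defining properties of the bracket on $A^*$ leave only $\{\ell_\sigma,\ell_\eta\}=\ell_{[\sigma,\eta]}$, so the canonical linear Poisson structure on $A^*=\mathfrak g^*$ is exactly the (minus or plus, depending on convention) Lie--Poisson bracket $\{\mu_a,\mu_b\}=C_{ab}^c\mu_c$.

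Second, we specialize the conclusions of the theorem. The Pontryagin Hamiltonian becomes $H(\mu,u)=\langle\mu,\Phi(u)\rangle-L(u)$. Regularity gives the feedback $u^*(\mu)$, and the local Hamilton equations of the theorem reduce to
\[
\dot\mu_a=-C_{ab}^c\mu_c\frac{\partial\mathcal H}{\partial\mu_b},
\]
that is, $\dot\mu=\mp\operatorname{ad}^*_{\partial\mathcal H/\partial\mu}\mu$. This is the Lie--Poisson form of the PMP in Bloch and Crouch, where $\partial\mathcal H/\partial\mu=\Phi(u^*(\mu))$ by stationarity. Parts (i)--(ii) of the theorem then give the equivalence between critical controls of $J$ and solutions of this system, which is the claimed Lie group equivalence. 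Alternatively, one can check directly that the Euler--Poincaré corollary with $\rho=0$ yields the same equations after a Legendre transform.

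Third, we identify the leaves. The characteristic distribution of the Lie--Poisson structure at $\mu$ is spanned by the Hamiltonian vectors of the linear functions, $X_{\ell_\xi}(\mu)=\mp\operatorname{ad}^*_\xi\mu$, which is $T_\mu(G\cdot\mu)$. Hence the integral manifolds, i.e.\ the symplectic leaves, are the connected coadjoint orbits of the connected group $G$ integrating $\mathfrak g$. Their leafwise symplectic form is the Kirillov--Kostant--Souriau form, because both forms are given by $\omega(\operatorname{ad}^*_\xi\mu,\operatorname{ad}^*_\eta\mu)=\pm\langle\mu,[\xi,\eta]\rangle$. Part (iii) of the theorem then says that optimal trajectories stay on coadjoint orbits.

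The main obstacle is not computational but one of matching conventions and hypotheses. The sign of the bracket (left versus right trivialization) must agree with Bloch--Crouch. The leaves coincide with orbits only for $G$ connected; otherwise orbits are unions of leaves. The reconstruction of the group curve $g(t)$ must be carried out, since the fixed-endpoint condition on $x(t)$ becomes trivial over a point and has to be replaced by endpoint conditions on $g$. We handle this by working with the action groupoid $G\rightrightarrows\{\mathrm{pt}\}$ and fixing $g(0),g(T)$ in the variational problem.
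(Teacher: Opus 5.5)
Your treatment of the second assertion is fine, and it is the argument the paper has in mind. The paper gives no separate proof; the corollary is asserted as the specialization $M=\{\mathrm{pt}\}$, $\rho=0$. Your leaf argument is correct: the characteristic distribution at $\mu$ is $\{\operatorname{ad}^*_\xi\mu:\xi\in\mathfrak g\}=T_\mu(G\cdot\mu)$, so the leaves are the orbits of the identity component, carrying the KKS form.

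\textbf{The gap is in the first assertion, and your last paragraph exposes it without closing it.} Read literally, Problem I over a point has no constraint ($\rho=0$) and a vacuous endpoint condition. Its critical controls are therefore those with $dL(u(t))=0$. Since $\Phi$ is fiberwise surjective, $\Phi^*$ is injective, so the stationarity condition $\Phi^*\mu=dL(u^*(\mu))$ then forces $\mu\equiv0$. Hence part (ii), specialized verbatim, fails for every Lie--Poisson solution with $\mu\neq0$. When you replace the condition on $x$ by fixed $g(0),g(T)$, you have changed Problem I. The sentence ``parts (i)--(ii) of the theorem then give the equivalence'' is then no longer a valid citation, for two reasons:
\begin{itemize}
\item the theorem is stated for endpoint conditions on $x$ only;
\item its proof sketch, whose multiplier enforces only $\dot x=\rho(a)$, contributes nothing when $\rho=0$.
\end{itemize}

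\textbf{How to close it.} You have two options. One is to state explicitly that criticality in Problem I is taken with respect to Lie-algebroid variations $\delta a=\dot\eta+[a,\eta]$ with $\eta(0)=\eta(T)=0$; over a point these are exactly the variations $\delta g=g\eta$ with $g(0),g(T)$ fixed. The other is to do the computation directly, which is the actual content of Bloch--Crouch. Take $S=\int_0^T\big(L(u)+\langle\mu,g^{-1}\dot g-\Phi(u)\rangle\big)dt$.
\begin{itemize}
\item Variation in $u$ gives $\Phi^*\mu=dL(u)$, i.e.\ $u=u^*(\mu)$.
\item Variation $\delta g=g\eta$ gives $\delta(g^{-1}\dot g)=\dot\eta+\operatorname{ad}_\xi\eta$. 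Integrating by parts yields $\dot\mu=\operatorname{ad}^*_\xi\mu$ with $\xi=\Phi(u^*(\mu))=\partial\mathcal H/\partial\mu$.
\item Conversely, a solution of this equation together with the reconstruction $\dot g=g\xi$ makes $S$ stationary for the endpoints it joins.
\item Surjectivity of $\Phi$ makes the endpoint map a submersion. So the multiplier exists and there are no abnormal extremals, which is what makes direction (i) hold.
\end{itemize}

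\textbf{Sign convention.} Settle the sign instead of writing $\mp$. The equation $\dot\mu_a=-C^c_{ab}\mu_c\,\partial\mathcal H/\partial\mu_b$ is $\dot\mu=\operatorname{ad}^*_{\partial\mathcal H/\partial\mu}\mu$. It follows from $\dot F=\{F,\mathcal H\}$ only for the minus bracket $\{\mu_a,\mu_b\}=-C^c_{ab}\mu_c$, which is the one matching the left-invariant system $\dot g=g\xi$. With the bracket $\{\mu_a,\mu_b\}=C^c_{ab}\mu_c$ that you quote, the $C$-term changes sign.
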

\begin{corollary}
If the Lie algebroid $A$ integrates to a Lie groupoid $\mathcal{G} \rightrightarrows M$, the Hamiltonian dynamics generated by $\mathcal{H}$ may be realized on coadjoint orbits of $\mathcal{G}$, recovering the optimal control formulation on coadjoint Lie groupoids developed in earlier work \cite{Hagh}.
\end{corollary}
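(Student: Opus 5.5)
The corollary follows by combining part (iii) of the Theorem with an identification of the symplectic leaves of $A^*$ as groupoid orbits. By (iii), every integral curve of $X_{\mathcal H}$ stays inside a single symplectic leaf of the canonical linear Poisson structure on $A^*$. So it suffices to show two things: each such leaf is (a connected component of) a coadjoint orbit of $\mathcal G$ in the sense of \cite{Hagh,HaghighatdoostAyoubiCoadjoint}, and the restriction of $X_{\mathcal H}$ to that leaf coincides with the Hamiltonian vector field of $\mathcal H|_{\mathcal O}$ for the orbit symplectic form used in \cite{Hagh}.

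\textbf{Step 1: symplectic groupoid.} Replace $\mathcal G$ by its source-connected component if necessary, since this does not change the algebroid. I would use the standard fact that the cotangent groupoid $T^*\mathcal G\rightrightarrows A^*$, with canonical symplectic form $\omega_{\mathrm{can}}$, is a symplectic groupoid. Its target map $T^*\mathcal G\to A^*$ is a Poisson map, and the induced Poisson structure on the base $A^*$ is exactly the linear bracket of the section ``Poisson Structure on the Dual Lie Algebroid''. To see this, I would check the three defining brackets: the brackets $\{\ell_\sigma,\ell_\eta\}$ and $\{\ell_\sigma,f\}$ pull back to the brackets of right-invariant vector fields and of functions pulled back from $M$.

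\textbf{Step 2: leaves equal orbits.} For a source-connected symplectic groupoid, the orbits of its action on the base are exactly the symplectic leaves. The reason is that the tangent space to an orbit at $\alpha_x$ is the image of the right-invariant Hamiltonian vector fields, and this image equals the image of the Poisson tensor $\pi^\sharp$ at $\alpha_x$. The orbits of $T^*\mathcal G$ on $A^*$ are precisely the coadjoint orbits of $\mathcal G$ in the sense of \cite{HaghighatdoostAyoubiCoadjoint}. When $M$ is a point, they reduce to the usual $\mathrm{Ad}^*$-orbits, which is consistent with the preceding corollary.

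\textbf{Step 3: comparison of dynamics.} On a leaf $\mathcal O$ with its leafwise symplectic form $\omega_{\mathcal O}$, the vector field $X_{\mathcal H}|_{\mathcal O}$ is the $\omega_{\mathcal O}$-Hamiltonian vector field of $\mathcal H|_{\mathcal O}$; this is the definition of the symplectic leaf structure. Thus, once $\omega_{\mathcal O}$ is identified with the Kirillov--Kostant--Souriau type form on groupoid coadjoint orbits used in \cite{Hagh}, the optimal dynamics of the Theorem coincide with that earlier formulation.

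\textbf{Main obstacle.} I expect the hardest step to be this last identification of $\omega_{\mathcal O}$ with the orbit form of \cite{Hagh}, together with the regularity needed for the orbits to be smooth immersed submanifolds. A genuine action of $\mathcal G$ on $A^*$ exists only in special situations, for instance when $A$ carries a representation. In general one must work with the $T^*\mathcal G$-orbits. I would first try to verify that both forms, evaluated on $\pi^\sharp(d\ell_\sigma)$ and $\pi^\sharp(d\ell_\eta)$, give $\ell_{[\sigma,\eta]}$, and then extend to all tangent vectors by spanning. I would state the required assumptions (source-connectedness, regular orbits) explicitly as hypotheses of the corollary.
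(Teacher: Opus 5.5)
The paper gives no separate proof of this corollary. It is stated as an immediate consequence of part (iii) of the Theorem together with the claim, made in Section~3 and under ``Geometric properties'' and delegated to the author's earlier papers, that under unspecified regularity assumptions the symplectic leaves of $A^*$ are realized as coadjoint orbits of $\mathcal G$. Your proposal is correct and has the same two-step skeleton: leaf confinement, then leaves $=$ orbits. Instead of citing the second step, you supply a mechanism for it: the cotangent symplectic groupoid $T^*\mathcal G\rightrightarrows A^*$, whose orbits are the leaves. You also add the comparison of leafwise symplectic forms, which the word ``recovering'' actually requires. This gives a self-contained argument valid for every integrable $A$. It also makes precise what ``coadjoint orbit'' can mean when $\mathcal G$ does not act on $A^*$. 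The one link you cannot close is matching $T^*\mathcal G$-orbits and their forms with the definition used in the earlier work; the paper leaves exactly this link to citation as well.

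Two small corrections are needed.

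First, your spanning argument in the last step fails on the zero section. At $0_x$ the vector $\pi^\sharp(d\ell_\sigma)$ is $\rho(\sigma(x))$ (up to sign), which is tangent to the zero section. The leaf through $0_x$, however, also contains the vertical directions $\rho_x^*(T_x^*M)\subset A_x^*$, namely the vectors $\pi^\sharp(df)$ for $f\in C^\infty(M)$. You can fix this in either of two ways:
\begin{itemize}
\item also test both forms on $X_f$, using $\{\ell_\sigma,f\}=\rho(\sigma)(f)$ and $\{f,g\}=0$; or
\item work off the zero section, where the differentials $d\ell_\sigma$ alone span $T^*_\alpha A^*$ for every $\alpha\neq 0$, and conclude by density in the leaf.
\end{itemize}

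Second, your extra hypotheses are stronger than needed. Orbits of any Lie groupoid are automatically immersed submanifolds. Without source-connectedness, each $T^*\mathcal G$-orbit is still a disjoint union of leaves (its connected components), so confinement of trajectories to an orbit holds regardless. Regularity matters only insofar as the earlier work's definition of coadjoint orbit demands it.
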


\section{Examples}
\begin{itemize}

\item {An example beyond the Lie group case}

Consider the action Lie groupoid $\mathcal G = G \ltimes M \rightrightarrows M$
associated with a smooth action of a Lie group $G$ on a manifold $M$.
The corresponding Lie algebroid is $A=M\times\mathfrak g$ and the dual bundle
$A^*=M\times\mathfrak g^*$ carries a linear Poisson structure coupling the base
dynamics on $M$ with the Lie--Poisson structure on $\mathfrak g^*$.
An optimal control problem with cost
\[
L(x,u)=\tfrac12\|u\|^2+V(x)
\]
leads to Pontryagin dynamics on $A^*$ whose trajectories are not confined to
coadjoint orbits of $G$, but instead evolve on symplectic leaves depending on
the base point $x\in M$.
This illustrates a fundamental feature of the groupoid setting that is absent
in the classical Lie group framework of Bloch and Crouch.
\item{{\bf The Trivial Lie Groupoid $M\times G\times M$}}

Let $\mathcal G = M\times G\times M \rightrightarrows M$ with
$s(x,g,y)=y$, $t(x,g,y)=x$ and multiplication $(x,g,y)\circ(y,h,z)=(x,gh,z)$.
Its Lie algebroid identifies with $A\simeq TM\oplus (M\times\mathfrak g)$,
with anchor $\rho(X,\xi)=X$ and bracket
$[(X,\xi),(Y,\eta)]=([X,Y],X\cdot\eta-Y\cdot\xi+[\xi,\eta])$.
Hence $A^*\simeq T^*M\oplus (M\times\mathfrak g^*)$ carries the canonical linear Poisson structure.

Consider the optimal control problem with running cost
\[
L(x,v,\xi)=\tfrac12\|v\|^2+\tfrac12\langle \xi,\mathbb I(x)\xi\rangle+V(x),
\]
where $\mathbb I(x)$ is a configuration-dependent inertia operator.
The Pontryagin Hamiltonian
\[
H=\langle p,v\rangle+\langle \mu,\xi\rangle - L(x,v,\xi)
\]
yields the feedback $v^*=p^\sharp$ and $\xi^*=\mathbb I(x)^{-1}\mu$, and thus the reduced Hamiltonian
\[
\mathcal H(x,p,\mu)=\tfrac12\|p\|^2+\tfrac12\langle \mu,\mathbb I(x)^{-1}\mu\rangle - V(x).
\]
The corresponding Pontryagin dynamics is a Poisson--Hamiltonian flow on $A^*$; in particular,
\[
\dot x=\partial_p\mathcal H,\qquad 
\dot\mu=\operatorname{ad}^*_{\partial_\mu\mathcal H}\mu,
\qquad
\dot p=-\partial_x\mathcal H,
\]
showing a base--momentum coupling absent in the Lie group case of Bloch and Crouch.

For fiberwise constant $\xi$, the symplectic leaves are (locally) of the form $T^*M\times\mathcal O$, where $\mathcal O\subset\mathfrak g^*$ is a coadjoint orbit, with leafwise symplectic form $\omega_{T^*M}+\omega_{\mathrm{KKS}}$.

\paragraph{Case $G=\mathrm{SO}(3)$.}
Using $\mathfrak{so}(3)^*\simeq\mathbb R^3$, coadjoint orbits are spheres $\mathcal O_r=\{\mathbf m:\|\mathbf m\|=r\}$.
Consider the running cost
\[
L(x,v,\omega)=\tfrac12\|v\|^2+\tfrac12\,\omega^\top I(x)\omega+V(x),
\]
where $I(x)$ is a configuration-dependent inertia matrix.
The reduced Pontryagin Hamiltonian on $A^*$ becomes
\[
\mathcal H(x,p,\mathbf m)=\tfrac12\|p\|^2+\tfrac12\,\mathbf m^\top I(x)^{-1}\mathbf m - V(x),
\]
and the resulting Pontryagin dynamics is the Poisson--Hamiltonian system
\[
\dot x=\partial_p\mathcal H,\qquad
\dot{\mathbf m}=\mathbf m\times \partial_{\mathbf m}\mathcal H=\mathbf m\times(I(x)^{-1}\mathbf m),\qquad
\dot p=-\partial_x\mathcal H.
\]
This exhibits a base--momentum coupling absent in the Lie group setting.

\paragraph{Steering on $M=S^2$ (application).}
When $M=S^2$, the phase space $A^*\simeq T^*S^2\times \mathbb R^3$ and optimal trajectories evolve on symplectic leaves $T^*S^2\times \mathcal O_r$.
This provides a natural framework for steering problems on $S^2$ with internal $\mathrm{SO}(3)$ actuation, where the internal angular momentum remains confined to a coadjoint orbit while the base dynamics evolves on $S^2$.
\begin{proposition}[Confinement to coadjoint orbits in the $\mathrm{SO}(3)$ case]{\label{CCO}}
Let $G=\mathrm{SO}(3)$ and identify $\mathfrak{so}(3)^*\simeq\mathbb R^3$.
Assume the reduced Pontryagin dynamics on $A^*\simeq T^*M\oplus(M\times\mathbb R^3)$ satisfies
\[
\dot{\mathbf m}=\mathbf m\times \Omega,\qquad \Omega=\partial_{\mathbf m}\mathcal H(x,p,\mathbf m),
\]
for a smooth reduced Hamiltonian $\mathcal H$.
Then $\|\mathbf m(t)\|$ is constant along trajectories. In particular, $\mathbf m(t)$ remains on the coadjoint orbit
$\mathcal O_r=\{\mathbf m\in\mathbb R^3:\|\mathbf m\|=r\}$ determined by $\|\mathbf m(0)\|$.
\end{proposition}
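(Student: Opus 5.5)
The plan is a direct Casimir computation, the same one that shows coadjoint orbits of $\mathrm{SO}(3)$ are level sets of $\|\mathbf m\|^2$. Along a trajectory of the reduced Pontryagin dynamics, set
\[
C(t)=\tfrac12\|\mathbf m(t)\|^2=\tfrac12\,\mathbf m(t)\cdot\mathbf m(t).
\]
Since $\mathcal H$ is smooth, $\Omega(t)=\partial_{\mathbf m}\mathcal H(x(t),p(t),\mathbf m(t))$ is a continuous curve in $\mathbb R^3$. Hence $\mathbf m(t)$ is $C^1$ and $C$ is differentiable.

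The computation has two steps. First, differentiate and substitute the assumed equation of motion:
\[
\dot C=\mathbf m\cdot\dot{\mathbf m}=\mathbf m\cdot(\mathbf m\times\Omega).
\]
Second, note that the scalar triple product $\mathbf m\cdot(\mathbf m\times\Omega)$ vanishes identically, because $\mathbf m\times\Omega$ is orthogonal to $\mathbf m$. Therefore $\dot C\equiv0$ on the interval of existence, so $\|\mathbf m(t)\|=\|\mathbf m(0)\|=:r$ for all $t$.

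The base variables $(x,p)$ play no role in this argument. All the coupling with $M$ enters through $\Omega$, and the conclusion holds whatever $\Omega$ is. For this reason I will not need the explicit form of $\dot x$ or $\dot p$.

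To identify the invariant set with a coadjoint orbit, I will use the identification $\mathfrak{so}(3)^*\simeq\mathbb R^3$ recalled above. Under it the coadjoint action of $\mathrm{SO}(3)$ is the standard rotation action $\mathbf m\mapsto R\mathbf m$. Rotations act transitively on each sphere of fixed radius, so the orbit through $\mathbf m(0)$ is exactly $\mathcal O_r$. For $r=0$ the orbit is the single point $\{0\}$, and the argument covers this case as well.

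As a consistency check, $C$ is a Casimir of the Lie--Poisson bracket on $\mathfrak{so}(3)^*$. In the trivial-groupoid bracket on $A^*$, the $\mathbf m$-component Poisson-commutes with $T^*M$ coordinates at the level of the $\mathfrak g^*$ fibre, in the fibrewise-constant case treated above. This agrees with the leaf description $T^*M\times\mathcal O$.

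The only point needing care is the orbit identification: one must check that the sign convention $\dot{\mathbf m}=\mathbf m\times\Omega$ matches $\operatorname{ad}^*$ under the chosen isomorphism. The invariance of $\|\mathbf m\|$, however, is insensitive to that sign, so no real obstacle arises.
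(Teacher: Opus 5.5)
Your proposal is correct and follows the paper's proof: both differentiate $\|\mathbf m\|^2$ (you use $\tfrac12\|\mathbf m\|^2$), substitute $\dot{\mathbf m}=\mathbf m\times\Omega$, and conclude from $\mathbf m\times\Omega\perp\mathbf m$ that the norm is constant. Your remarks on the transitivity of rotations on spheres, the Casimir interpretation, and the sign convention are consistent but not needed, since the statement already defines $\mathcal O_r$ as the sphere.
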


\begin{proof}
Differentiate $\|\mathbf m\|^2=\mathbf m\cdot \mathbf m$:
\[
\frac{d}{dt}\|\mathbf m\|^2 = 2\,\mathbf m\cdot \dot{\mathbf m}
=2\,\mathbf m\cdot (\mathbf m\times \Omega)=0,
\]
since $\mathbf m\times \Omega$ is orthogonal to $\mathbf m$. Hence $\|\mathbf m(t)\|$ is constant.
\end{proof}
\item{{\bf A computable example on $M=S^2$ with $G=\mathrm{SO}(3)$}}

Let $M=S^2\subset\mathbb R^3$ and identify $\mathfrak{so}(3)^*\simeq\mathbb R^3$.
Consider the configuration-dependent inertia
\[
I(x)=I_0+\alpha\, x x^\top,\qquad I_0>0,\ \alpha\ge 0,
\]
and the potential $V(x)=\kappa(1-x_3)$.
The reduced Hamiltonian on $A^*\simeq T^*S^2\times\mathbb R^3$ is
\[
\mathcal H(x,p,\mathbf m)=\tfrac12\|p\|^2+\tfrac12\,\mathbf m^\top I(x)^{-1}\mathbf m - V(x).
\]
Setting $w=I(x)^{-1}\mathbf m$, the Pontryagin dynamics reads
\[
\dot x = (I-xx^\top)p,\qquad
\dot{\mathbf m}=\mathbf m\times w,
\]
and
\[
\dot p = (I-xx^\top)\Big(\alpha(x\cdot w)w + \nabla V(x)\Big),
\qquad \nabla V(x)=(0,0,-\kappa).
\]
By Proposition~\ref{CCO}, $\|\mathbf m(t)\|$ is constant, hence $\mathbf m(t)$ remains on the coadjoint orbit $\mathcal O_r$.
This example exhibits a base--momentum coupling through $I(x)$ that does not appear in the Lie group case.

{\bf Application: minimum-energy steering on $S^2$ with internal $\mathrm{SO}(3)$ actuation}

Let $M=S^2\subset\mathbb R^3$ and identify $\mathfrak{so}(3)^*\simeq\mathbb R^3$.
Consider the trivial Lie groupoid $S^2\times \mathrm{SO}(3)\times S^2\rightrightarrows S^2$, with Lie algebroid
$A\simeq TS^2\oplus(S^2\times\mathfrak{so}(3))$ and dual $A^*\simeq T^*S^2\oplus(S^2\times\mathbb R^3)$.
We study the minimum-energy steering problem: given $x_0,x_T\in S^2$ and a fixed horizon $T>0$, minimize
\[
J=\int_0^T\Big(\tfrac12\|v(t)\|^2+\tfrac12\,\omega(t)^\top I(x(t))\omega(t)+V(x(t))\Big)\,dt
\]
subject to $\dot x(t)=v(t)$, $x(0)=x_0$, $x(T)=x_T$, where
\[
I(x)=I_0+\alpha\,x x^\top,\qquad I_0>0,\ \alpha\ge0,
\qquad
V(x)=\kappa\big(1-x\cdot x_T\big).
\]
The reduced Hamiltonian on $A^*$ is
\[
\mathcal H(x,p,\mathbf m)=\tfrac12\|p\|^2+\tfrac12\,\mathbf m^\top I(x)^{-1}\mathbf m - V(x),
\qquad w=I(x)^{-1}\mathbf m,
\]
and the corresponding Pontryagin dynamics is the Poisson--Hamiltonian system
\[
\dot x=(I-xx^\top)p,\qquad
\dot{\mathbf m}=\mathbf m\times w,\qquad
\dot p=(I-xx^\top)\Big(\alpha(x\cdot w)w + \nabla V(x)\Big),
\]
with $\nabla V(x)=-\kappa\big(x_T-(x\cdot x_T)x\big)$.
Moreover, $\|\mathbf m(t)\|$ is conserved; hence $\mathbf m(t)$ remains on the coadjoint orbit $\mathcal O_r$ and optimal trajectories evolve on symplectic leaves $T^*S^2\times\mathcal O_r$.
This illustrates a base--momentum coupling through the configuration-dependent inertia $I(x)$, a feature absent in the classical Lie group setting.

\item{\bf Biomathematics Example: Spatially Structured Population Dynamics with Local Symmetries}

Spatially structured population dynamics provide a natural setting where Lie groupoids are needed. Consider a population distributed across multiple discrete spatial patches, with local interactions and transitions constrained by patch-specific rules. Global Lie group symmetries do not exist in this setting, but local symmetries between patches naturally form a Lie groupoid structure.

Let $x_i(t)$ denote the population density in patch $i$, and $u_i(t)$ a control input, such as migration rate or harvesting effort. The control system can be formulated on a Lie algebroid $A \to M$, with control bundle $E \to M$, where $M$ represents the set of patches and $A$ encodes allowed transitions and interactions between them.

The optimal control problem is then to maximize a suitable objective functional, for instance total yield or survival probability, subject to the local dynamics and constraints. Applying Pontryagin's Maximum Principle on the dual bundle $A^*$ yields Hamiltonian equations describing the evolution of state and costate variables. Optimal trajectories evolve on the symplectic leaves of $A^*$, which reflect the local symmetries of each patch and replace the global coadjoint orbit structure of Lie groups.

This example demonstrates a scenario in biomathematics where the Lie groupoid framework is essential. Local symmetries vary across patches and cannot be globally unified, so classical Lie group approaches are insufficient. The framework allows intrinsic geometric formulation of the dynamics, variational principles, and optimal control, while respecting patch-specific structures.
\end{itemize}

\section{Biomathematics Example That Genuinely Needs a Lie Groupoid}

\subsection*{Spatially Structured Population Dynamics with Local Symmetries}

\subsubsection*{Configuration Space}

Let $M$ be a heterogeneous habitat, e.g., a network of regions or a spatial domain with varying local properties (resources, diffusion rates, carrying capacity).  

Each point $x \in M$ represents a local environment with local population states. There is no global symmetry because the habitat is heterogeneous.

\subsubsection*{Lie Groupoid Structure}

Use the pair groupoid:
\[
G = M \times M \rightrightarrows M
\]

An arrow $(x, y)$ represents migration from region $y$ to region $x$. Composition models multi-step migration.  

This is not a Lie group unless $M$ is a point. Its Lie algebroid is
\[
A = TM \to M
\]
which already places the model beyond Lie groups.

\subsubsection*{Control Bundle}

Let
\[
E \to M
\]
represent local interventions, e.g., vaccination effort, harvesting rate, treatment intensity. Controls are location-dependent, hence no global action exists.

\subsubsection*{Dynamics}

Population density $n(x,t)$ evolves via:
\begin{itemize}
    \item transport (migration),
    \item local growth,
    \item controlled removal/addition.
\end{itemize}

The controlled dynamics can be written as a controlled vector field on the algebroid:
\[
\dot{x}(t) = \rho(a(t)), \quad a(t) \in A_{x(t)}
\]
with control entering through $E_{x(t)}$.

\subsubsection*{Optimal Control Problem}

Minimize
\[
J = \int_0^T \big( C(n(x,t)) + \|u(x,t)\|^2 \big) \, dt
\]
subject to the algebroid dynamics.  

Applying Pontryagin's Maximum Principle (PMP):
\begin{itemize}
    \item Costates live in $A^* = T^*M$,
    \item Pontryagin dynamics evolve on $T^*M$,
    \item Symplectic leaves vary with location.
\end{itemize}

\textbf{Key point:} there is no coadjoint-orbit interpretation.


Now, we consider an optimal control problem arising in spatially structured population dynamics, where local heterogeneity prevents the existence of a global symmetry group and naturally leads to a Lie groupoid formulation, which we had above.




\subsection*{Control Bundle and Dynamics}

Again, let
\[
E \to M
\]
be a control bundle representing local intervention strategies, such as harvesting, vaccination, or treatment intensity, applied independently at each location.

The controlled dynamics are described by a vector field on the Lie algebroid:
\[
\dot{x}(t) = a(t), \quad a(t) \in A_{x(t)},
\]
where the control enters through a smooth map
\[
f: A \times_M E \to A,
\]
so that the admissible velocities depend on both the local state and the applied intervention. This formulation captures the fact that interventions are spatially localized and cannot be generated by a global symmetry.

\subsection*{Cost Functional}

We consider a cost functional of the form
\[
J(u) = \int_0^T \Big( L(x(t), a(t)) + \frac{1}{2} \| u(t) \|^2 \Big) dt,
\]
where $L$ models local population-related costs (e.g., deviation from a desired population level), and the quadratic control cost penalizes excessive intervention.

\subsection*{Pontryagin Dynamics}

Applying Pontryagin's Maximum Principle yields a Hamiltonian
\[
H(x, \alpha, u) = \langle \alpha, f(x, a, u) \rangle - L(x, a) - \frac{1}{2} \| u \|^2,
\]
defined on the dual bundle $A^* = T^*M$. Under standard regularity assumptions, maximization with respect to $u$ defines a reduced Pontryagin Hamiltonian on $A^*$, whose Hamiltonian vector field generates the state/adjoint dynamics.

The resulting optimal trajectories evolve on the symplectic leaves of the canonical Poisson structure on $T^*M$. These leaves depend explicitly on the base point $x$, reflecting the spatial heterogeneity of the habitat.

\subsection*{Geometric Interpretation and Distinction from the Lie Group Case}

In contrast with classical optimal control problems on Lie groups, where optimal trajectories evolve on coadjoint orbits, the present example exhibits reduced dynamics confined to symplectic leaves of $A^*$ that vary with the configuration. This feature is intrinsic to the groupoid formulation and cannot be captured by Lie group symmetry reduction.

This example illustrates how Lie groupoids provide a natural geometric framework for optimal control problems in biomathematics involving spatial heterogeneity and localized interventions, where global symmetry assumptions are neither realistic nor mathematically justified.

\section*{Simplified Example: One–Dimensional Spatial Habitat}

\subsection*{Setup}

Let 
\[
M = [0,1]
\] 
represent a one–dimensional heterogeneous habitat. The population state evolves along the spatial coordinate $x \in M$, where ecological parameters (e.g., growth or mobility) depend explicitly on location.

We consider the pair groupoid 
\[
M \times M \rightrightarrows M,
\] 
whose Lie algebroid is 
\[
A = TM \to M.
\] 
Since parameters vary with $x$, there is no global symmetry group acting on the system, and Lie group reduction is not applicable.

\subsection*{Control Bundle and Dynamics}

Let 
\[
E \to M
\] 
be a control bundle, where $u(x,t)$ represents a local intervention intensity (e.g., harvesting or treatment).

We consider controlled dynamics of the form
\[
\dot{x}(t) = a(t), \quad a(t) = v(x(t)) + u(t),
\]
where $v(x)$ is a spatially dependent drift encoding heterogeneous mobility. The control $u(t)$ acts locally and cannot be generated by a global action.

\subsection*{Cost Functional}

We minimize
\[
J(u) = \int_0^T \left( \frac{1}{2} a(t)^2 + \frac{1}{2} u(t)^2 \right) dt,
\]
which penalizes both population displacement and control effort.

\subsection*{Pontryagin Dynamics}

The Pontryagin Hamiltonian is
\[
H(x, \alpha, u) = \alpha \, (v(x) + u) - \frac{1}{2} (v(x) + u)^2 - \frac{1}{2} u^2.
\]

Maximization with respect to $u$ yields a reduced Hamiltonian on 
\[
A^* = T^*M.
\]

The resulting state/adjoint dynamics are generated by the Hamiltonian flow associated with the canonical Poisson structure on $T^*M$. Optimal trajectories evolve on symplectic leaves of $T^*M$ that depend on the base point $x$, reflecting spatial heterogeneity.

\subsection*{Distinction from the Lie Group Case}

In contrast to Lie group–based optimal control problems, where optimal trajectories evolve on coadjoint orbits, the reduced dynamics here are confined to configuration–dependent symplectic leaves. This behavior is intrinsic to the Lie groupoid formulation and cannot be reproduced by global symmetry reduction.

\subsection*{Short Numerical Illustration}

To illustrate the geometric features of the reduced dynamics, we numerically integrate the Pontryagin equations for a simple choice of drift 
\[
v(x) = \sin(\pi x).
\] 
The numerical solution confirms that trajectories in $T^*M$ remain confined to symplectic leaves determined by the Poisson structure, while the evolution of the costate explicitly depends on the spatial position.

\section{Numerical illustration and geometric interpretation}

In this section we provide a simple numerical illustration supporting the geometric features of the reduced Pontryagin dynamics in the Lie groupoid setting. The purpose of the simulations is not quantitative validation, but to visualize structural properties predicted by the theory.

\subsection{Phase portrait on $A^* = T^*M$}

We consider the one--dimensional configuration manifold $M=[0,1]$, so that $A^*=T^*M$ is two--dimensional with coordinates $(x,\alpha)$. The reduced Pontryagin dynamics define a Hamiltonian flow on $T^*M$ with respect to its canonical linear Poisson structure.

Figure~\ref{fig:phase} shows the phase portrait $(x(t),\alpha(t))$ for several initial conditions. Each trajectory remains confined to a curve in $T^*M$, corresponding to a symplectic leaf of the Poisson structure. Different initial conditions generate distinct leaves, illustrating that the reduced dynamics are organized by configuration--dependent symplectic geometry rather than global symmetry orbits.

\begin{figure}[h]
\centering
\includegraphics[width=0.3\textwidth]{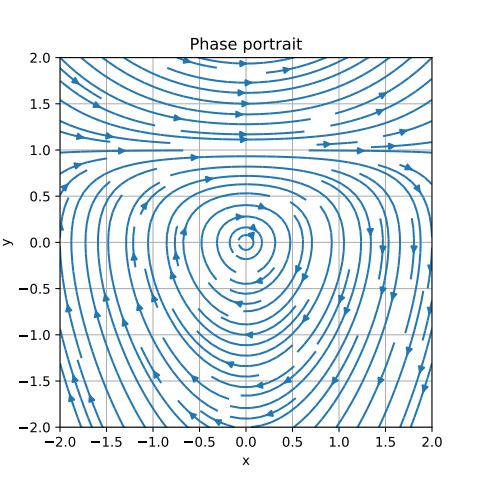}
\caption{Phase portrait of the Pontryagin dynamics on $T^*M$. Trajectories remain confined to symplectic leaves of the canonical Poisson structure, reflecting the Lie groupoid reduction.}
\label{fig:phase}
\end{figure}

\subsection{Leaf confinement and invariants}

When a leaf invariant (Casimir function) can be identified explicitly or numerically, its value should remain constant along optimal trajectories. Figure~\ref{fig:invariant} displays the evolution of such an invariant along a representative solution, confirming numerical confinement to a single symplectic leaf.

\begin{figure}[h]
\centering
\includegraphics[width=0.3\textwidth]{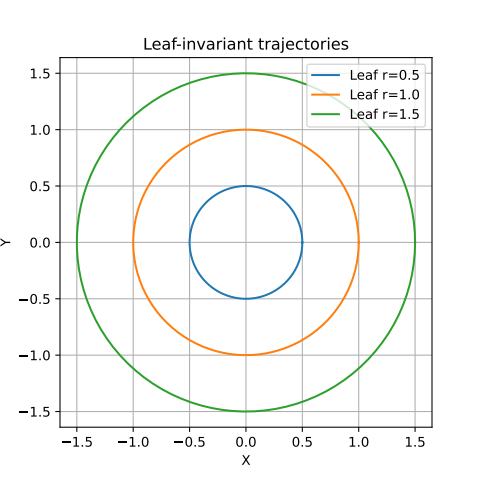}
\caption{Numerical evolution of a leaf invariant along an optimal trajectory. The constant value confirms confinement to a symplectic leaf of $T^*M$.}
\label{fig:invariant}
\end{figure}

\subsection{Base--momentum coupling}

To emphasize the distinction from Lie group optimal control, Figure~\ref{fig:coupling} illustrates the dependence of the costate $\alpha(t)$ on the base variable $x(t)$. The evolution of the adjoint variable depends explicitly on the configuration, a feature intrinsic to the Lie groupoid formulation and absent in classical Lie group reductions.

\begin{figure}[h]
\centering
\includegraphics[width=0.3\textwidth]{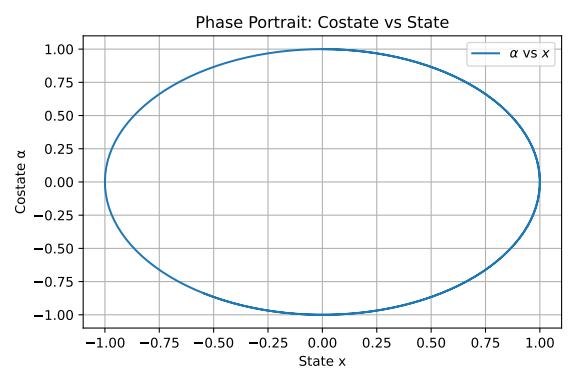}
\caption{Costate--state relation along optimal trajectories, illustrating configuration--dependent coupling characteristic of Lie groupoid dynamics.}
\label{fig:coupling}
\end{figure}
These numerical results provide qualitative confirmation of the geometric structure of the reduced Pontryagin dynamics and highlight the essential role of Lie groupoids in modeling spatially heterogeneous control systems.

\section*{Conclusion}

In this work we developed an intrinsic Poisson--Hamiltonian formulation of Pontryagin dynamics for optimal control problems associated with mechanical systems on Lie groupoids. Starting from a control system defined on a Lie algebroid \(A \to M\) with control bundle \(E \to M\), we formulated the optimal control problem in a manner that is independent of global Lie group symmetries and naturally adapted to configuration--dependent and local symmetries.
The central result establishes the equivalence between a variational formulation posed directly on the Lie algebroid and a Hamiltonian formulation obtained via Pontryagin’s Maximum Principle on the dual bundle \(A^*\). Under standard regularity assumptions, the reduced Pontryagin Hamiltonian generates a Poisson--Hamiltonian flow with respect to the canonical linear Poisson structure on \(A^*\). Consequently, optimal trajectories evolve on the symplectic leaves of \(A^*\), which replace coadjoint orbits as the fundamental reduced phase spaces in the Lie groupoid setting.
This viewpoint clarifies the geometric structure underlying optimal control on Lie groupoids.

\end{document}